\newtheorem{theorem}{Theorem}[section]
\newtheorem{lemma}[theorem]{Lemma}
\newtheorem{corollary}[theorem]{Corollary}
\title{Lower bounds on binomial and Poisson tails: an approach via tail conditional expectations}
\author{Christos Pelekis\thanks{The Czech Academy of Sciences, Institute of Computer Science, Pod Vod\'{a}renskou v\v{e}\v{z}\'{\i} 2, 182 07 Prague, Czech Republic. E-mail: pelekis.chr@gmail.com} }
\begin{document}

\maketitle

\begin{abstract}  We derive upper bounds on the tail conditional expectation of binomial and Poisson random variables.  
Those upper bounds are subsequently employed to the problem of obtaining  non-asymptotic lower bounds on the probability that the aforementioned  random variables are  
significantly larger than their expectation. 
\end{abstract}

\noindent \emph{Keywords}:  lower bounds; binomial tails; Poisson tails; tail conditional expectation

\noindent \emph{MSC (2010):} 60E15; 60E05

\section{Prologue and main result}

Given a positive integer $n$ and a real number $p\in (0,1)$, we denote by $\text{Bin}(n,p)$ a binomial 
random variable of parameters $n$ and $p$. Given a positive real number $\mu$, we denote by $\text{Poi}(\mu)$ a Poisson random variable of mean $\mu$. 
Throughout the text, given two random variables $X,Y$, the notation 
$X\sim Y$ will indicate that they have the same distribution. 
 In this article we shall be interested in \emph{lower bounds} on 
the "tail" probabilities $\mathbb{P}\left[X\geq k\right]$, where $X\sim \text{Bin}(n,p)$ and $k>np$, and $\mathbb{P}\left[Y\ge k\right]$, where $Y\sim \text{Poi}(\mu)$ and $k>\mu$.  

The problem of estimating the tail of a binomial random variable is arguably one of the most basic problems in 
probability and statistics. 
It arises in a plethora of problems and, despite the fact that the binomial tails are asymptotically 
well understood  (see, for example, \cite{Bahadur, Carter, Fellerzero, Littlewood, Massart, McKay, Slud, Zubkov}), 
several questions coming from a variety of disciplines  (see \cite{Alon, Ash, Greenberg, Jerabek, Rigollet}) 
do not decrease the need for upper and lower 
bounds on $\mathbb{P}\left[\text{Bin}(n,p)\geq k\right]$  that are valid for fixed $n,p,k$.  
Sharp upper bounds on binomial tails are provided by the, so-called, Chernoff-Hoeffding bound (see \cite{Hoeffdingone})  
by employing an "exponential moment" approach: 
\begin{equation}\label{Hoebound}
\mathbb{P}\left[\text{Bin}(n,p)\geq k\right] \leq \text{exp}\left(-n D(k/n||p)\right), \; \text{for}\; k>np, 
\end{equation}
where $D(k/n||p)$ is the Kullback-Leibler distance between two Bernoulli $0/1$ random variables of parameter $k/n$ and $p$, respectively. 
Upper bounds that are sharper than the previous one can be obtained by employing  
a "factorial moment" approach (see \cite{Schmidt}): 

\begin{equation}\label{HoeboundSiegel}
\mathbb{P}\left[\text{Bin}(n,p)\geq k\right] \leq p^{\ell} \binom{n}{\ell}/\binom{k}{\ell}, \; \text{where}\;
\ell = \big\lceil  \frac{k-np}{1-p} \big\rceil , \; k>np.
\end{equation}
Here and later, $\lceil x\rceil$ denotes the minimum integer that is larger than or equal to $x$. 
Elementary, though quite tedious, calculations (see \cite{Schmidt} for details) show that the bound
given by (\ref{HoeboundSiegel}) is smaller than the bound given by (\ref{Hoebound}) but the two 
bounds are quite close to each other.

A well-known theorem of Cram\'er implies that the bound of Eq.(\ref{Hoebound}) is asymptotically tight and therefore one can only hope for sharp  lower bounds on the 
binomial tails that are a portion of the 
Chernoff-Hoeffding bound. 
In that regard, one has the following result (see \cite[Lemma $4.7.2$]{Ash}):  
\begin{equation}\label{Ashbound}
\mathbb{P}\left[\text{Bin}(n,p)\geq k\right]\geq \frac{1}{\sqrt{8k(1-k/n)}}\cdot \text{exp}\left(-n D(k/n||p)\right), \; \text{for}\; k>np. 
\end{equation}

Briefly, this bound      
follows from the observation 
$\mathbb{P}\left[\text{Bin}(n,p)\geq k\right]\geq \binom{n}{k}p^k(1-p)^{n-k}$ combined with elaborate use of entropy estimates on binomial coefficients and appears 
to be among the best known lower bounds. 
Several lower bounds on the tail of binomial random variables  
can be found in \cite{Diaconis, Doerr, Greenberg, Jerabek, PelekisRamon, Telgarsky}, among many others. 
Let us remark that most lower bounds in the literature appear to be 
obtained using ideas from asymptotic analysis and/or rely on estimates of the 
normal approximation to the binomial distribution.  In this article we provide yet another lower 
bound on binomial tails which results in a proportion of the bound given by (\ref{HoeboundSiegel}). 
We embark on a different 
approach that employs properties of tail conditional expectations.

In the case of Poisson random variables, the Chernoff-Hoeffding bound provides an upper estimate on the probability that a $\text{Poi}(\mu)$ random variable is significantly larger than its mean and reads as follows (see \cite[page 97]{Mitzenmacher}):
\begin{equation}\label{Poi_upper_bound}
\mathbb{P}\left[\text{Poi}(\mu) \ge k\right] \le \frac{e^{\mu}(e\mu)^k}{k^k}, \; \text{for} \; k >\mu.  
\end{equation}
Several upper bounds on Poisson tails can be found in \cite{Glynn, Teicher}, among others. 
However, the problem of obtaining lower bounds on Poisson tails seems to be less investigated and we were not able to find a systematic treatment of this topic. 

Let us proceed by introducing our main results. 
Notice that the trivial estimates 
\[ p^k \leq   \mathbb{P}\left[\text{Bin}(n,p)\geq k\right] \leq  \binom{n}{k}p^k\] 
imply that there exists a constant, $C_{n,p,k}$, that depends on $n,p,k$ such that 
\[ \mathbb{P}\left[\text{Bin}(n,p)\geq k\right] = C_{n,p,k} \cdot p^k . \]
It turns out that $C_{n,p,k}$ can be described in terms of tails conditional expectations of certain binomial 
random variables. More precisely, we will see in the next section that 
\[ C_{n,p,k} = \prod_{j=0}^{k-1} \frac{n-j}{\mathbb{E}\left[X_{n-j}| X_{n-j}\geq k-j\right]}, \; \text{where} \; 
X_{n-j} \sim \text{Bin}(n-j,p) \] 
and so the problem of obtaining lower bounds on the tail probability is reduced to the problem 
of obtaining upper bounds on tail conditional expectations of binomial random variables.  
The main novelty of our paper is that it appears to be the first report that investigates upper  
bounds on tail conditional expectations as a tool to obtain lower bounds on tail probabilities. 
Let us also remark that the problem of obtaining estimates on the tail conditional expectation (a.k.a. \emph{conditional value at risk}) of spesific distributions is of particular interest in actuarial sciences (see, for example,  \cite{Landsman, WangZhao}). 

Our bounds on the tail conditional expectation and the tail probability of a binomial random variable read as follows. 
Here and later, given a real number $x$, we will denote by $\lfloor x \rfloor$ the maximum integer that 
is less than or equal to $x$.\\

\begin{theorem}\label{mainthm} Let $X \sim \text{Bin}(n,p)$ and fix positive integer $k$ such that $np < k \leq n$. 
Then  
\[ \mathbb{E}\left[X | X\geq k\right] \leq  k + \frac{(n-k)p}{k-np+p}  . \]
Furthermore, if $k$ satisfies $np<k\le n-1$ we have 
\[ \mathbb{P}\left[X_n \geq k\right]   \geq  \frac{p^{2(\ell+1)}}{2}\cdot  \binom{n}{\ell +1}/\binom{k}{\ell +1} ,\]
where $\ell := \lfloor \frac{k-np}{1-p} \rfloor <k$.
\end{theorem}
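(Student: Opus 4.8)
The plan is to prove the two displayed inequalities in turn, using the first to power the second. Throughout I put $q=1-p$ and $b_i=\binom{n}{i}p^iq^{n-i}=\mathbb{P}[X=i]$, and I write $T=\sum_{i=k}^n b_i=\mathbb{P}[X\ge k]$ and $M=\sum_{i=k}^n i\,b_i=\mathbb{E}[X\mid X\ge k]\cdot T$.

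For the conditional--expectation bound I would start from the elementary recursion $(i+1)q\,b_{i+1}=(n-i)p\,b_i$. Summing it over $k\le i\le n$ makes both sides telescope, and after collecting terms one reaches the exact identity $M=np\,T+qk\,b_k$, that is
\[
\mathbb{E}[X\mid X\ge k]=np+qk\,\frac{\mathbb{P}[X=k]}{\mathbb{P}[X\ge k]} .
\]
Everything then reduces to an upper bound on $\mathbb{P}[X=k]/\mathbb{P}[X\ge k]$. Retaining only the first two terms of the tail, $\mathbb{P}[X\ge k]\ge b_k+b_{k+1}$, together with $b_{k+1}/b_k=(n-k)p/((k+1)q)$, gives $\mathbb{P}[X=k]/\mathbb{P}[X\ge k]\le (k+1)q/\big((k+1)q+(n-k)p\big)$. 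Substituting this into the identity and simplifying returns $k+\frac{(n-k)p}{k-np+p}$; the degenerate case $k=n$ is checked directly, since then $T=b_k$ and the identity already yields the value $n$. I expect the only real work here to be this closing algebraic reduction: one has to verify that the two--term estimate is strong enough, i.e. that a certain rational inequality in $n,p,k$ holds throughout the admissible range. This is the technical crux of the first part, and it is genuinely tight — numerically it approaches equality as $p=\tfrac12$, $k=n-1$, $n\to\infty$ — so no further tail term may be dropped.

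For the tail bound the engine is the one--step recursion underlying the product formula recalled in the introduction, namely
\[
\mathbb{P}[\mathrm{Bin}(n,p)\ge k]=\frac{np}{\mathbb{E}[X\mid X\ge k]}\cdot\mathbb{P}[\mathrm{Bin}(n-1,p)\ge k-1],
\]
which follows from $M=np\,\mathbb{P}[\mathrm{Bin}(n-1,p)\ge k-1]$ (via $i\binom ni=n\binom{n-1}{i-1}$) after dividing by $T$. Rather than iterate all the way down I would iterate it exactly $\ell+1$ times, arriving at
\[
\mathbb{P}[X_n\ge k]=\Bigg(\prod_{j=0}^{\ell}\frac{(n-j)p}{\mathbb{E}[X_{n-j}\mid X_{n-j}\ge k-j]}\Bigg)\cdot\mathbb{P}[\mathrm{Bin}(n-\ell-1,p)\ge k-\ell-1].
\]
The point of stopping at $\ell+1$ is that for $0\le j\le\ell$ one has $(n-j)p\le k-j$, so the first inequality of the theorem applies to each factor, whereas for the leftover probability the threshold $k-\ell-1$ sits \emph{strictly} below the mean $(n-\ell-1)p$; this is exactly the information encoded by $\ell=\lfloor\frac{k-np}{1-p}\rfloor$.

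To finish I would estimate the two pieces separately. For the leftover factor, since $k-\ell-1$ is an integer not exceeding the mean of $\mathrm{Bin}(n-\ell-1,p)$, the classical fact that a binomial median lies within the bracket $[\lfloor mp\rfloor,\lceil mp\rceil]$ of its mean forces $\mathbb{P}[\mathrm{Bin}(m,p)\ge t]\ge\tfrac12$ for every integer $t\le mp$; this is the origin of the factor $1/2$. For the product I would use the clean consequence $\mathbb{E}[X_{n-j}\mid X_{n-j}\ge k-j]\le (k-j)/p$ of the first part: inserting the bound $k+\frac{(n-k)p}{k-np+p}$ and clearing denominators reduces the desired inequality to $(u-mp)(uq+p)\ge0$ with $u=k-j$, $m=n-j$, which holds precisely because $u\ge mp$. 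Hence each factor is at least $(n-j)p^2/(k-j)$, and multiplying gives $\prod_{j=0}^{\ell}\frac{(n-j)p^2}{k-j}=p^{2(\ell+1)}\prod_{j=0}^\ell\frac{n-j}{k-j}=p^{2(\ell+1)}\binom{n}{\ell+1}/\binom{k}{\ell+1}$, which combined with the $1/2$ produces the claimed lower bound. The two points needing care are the binomial median inequality for the leftover tail and the borderline case in which $\frac{k-np}{1-p}$ is an integer (so the $j=\ell$ factor has threshold equal to its mean); the latter is absorbed by the trivial bound $\mathbb{E}[X_{n-\ell}\mid X_{n-\ell}\ge k-\ell]\le n-\ell=(k-\ell)/p$.
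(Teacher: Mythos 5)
Your exact identity $\mathbb{E}[X\mid X\ge k]=np+(1-p)k\,\frac{\mathbb{P}[X=k]}{\mathbb{P}[X\ge k]}$ is correct, and the second half of your argument (iterating the one-step recursion $\ell+1$ times, the median bound for the leftover factor, and the per-factor reduction to $(u-mp)(uq+p)\ge0$) is sound and essentially coincides with the paper's proof of the tail bound. The genuine gap is exactly the step you flag as the crux of the first part: the two-term truncation $\mathbb{P}[X\ge k]\ge b_k+b_{k+1}$ is \emph{not} strong enough, and the rational inequality you defer to is false in part of the admissible range. Concretely, take $n=150$, $p=0.01$, $k=5$ (so $np=1.5<k\le n-1$). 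Your chain gives
\[
\mathbb{E}[X\mid X\ge k]\;\le\; np+qk\cdot\frac{(k+1)q}{(k+1)q+(n-k)p}
=1.5+4.95\cdot\frac{5.94}{7.39}\approx 5.479,
\]
while the bound to be proved is $k+\frac{(n-k)p}{k-np+p}=5+\frac{1.45}{3.51}\approx 5.413$; your intermediate upper bound overshoots the target, so the closing reduction cannot succeed. Structurally: with $d=k-np$, clearing denominators turns the required inequality into $(kq-d)\left(d^2-(k+2)qd+q[2k+1-(k+1)p]\right)\ge0$, and the quadratic factor dips below zero near its vertex $d=(k+2)q/2$ as soon as $p$ is small and $k\ge5$ (its minimum tends to $2k+1-(k+2)^2/4<0$ as $p\to0$). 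Your tightness check at $p=\tfrac12$, $k=n-1$ is correct but concerns a different corner of the parameter space and does not certify the inequality elsewhere. The failure happens precisely when $b_{k+1}/b_k=(n-k)p/((k+1)q)$ is moderate and the terms $b_{k+2},b_{k+3},\dots$ still carry non-negligible mass, so two terms undercount the tail.

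The paper avoids this by never lower-bounding the tail by finitely many terms: it proves the conditional-expectation bound by reverse induction on $k$, combining the recursion $\mathbb{E}[X\mid X\ge k]=k\,\frac{\mathbb{P}[X=k]}{\mathbb{P}[X\ge k]}+\mathbb{E}[X\mid X\ge k+1]\,\frac{\mathbb{P}[X\ge k+1]}{\mathbb{P}[X\ge k]}$ with the ratio estimate $\mathbb{P}[X\ge k+1]/\mathbb{P}[X\ge k]\le p(n-k)/(k(1-p))$, so that the inductive hypothesis carries the information about the whole remaining tail. Note that this ratio estimate (Feller's inequality) bounds $\mathbb{P}[X=k]/\mathbb{P}[X\ge k]$ from \emph{below}, which is the wrong direction for your identity; to salvage your route you would need an upper bound on $\mathbb{P}[X=k]/\mathbb{P}[X\ge k]$ valid uniformly over the admissible range, and an induction of the paper's type is the natural way to produce one. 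Your proof of the second statement is fine once the first statement is repaired, since it only uses the first statement as a black box together with the median fact and the algebra you already supplied.
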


Notice that when $k$ is such that $np<k< np + 1-p$, then $\ell =0$ and the lower bound of the previous result reduces to 
\begin{equation}\label{eqq:1}
 \mathbb{P}\left[X\ge np\right] \ge \frac{1}{2}\cdot p^2\cdot\frac{n}{k}\ge \frac{p}{2}\cdot \left(1- \frac{1-p}{k}\right)\ge \frac{p}{2}\cdot \left(1- \frac{1}{k}\right) 
\end{equation} 
and the later bound is larger than $\frac{1}{4}\cdot \frac{9}{10}$, when $p>1/2$ and $k\ge 10$. We refer the reader to \cite{Doerr, Greenberg, PelekisRamon}  for sharper lower  bounds on the the tail probability in (\ref{eqq:1}). 
In other words, the second statement of Theorem \ref{mainthm} provides a lower bound on  
binomial tails that is equal to  a particular proportion  of the upper bound given by (\ref{HoeboundSiegel}). 
We prove  Theorem \ref{mainthm} in  Section \ref{sec:2}. Let us remark that comparisons between  
the lower bound provided by Theorem \ref{mainthm} and the  lower 
bound given by (\ref{Ashbound}) require quite tedious calculations. However, 
it is rather straightforward 
to put the computer to work and see that  the bound of   
Theorem \ref{mainthm} sometimes performs better than the 
bound given by (\ref{Ashbound}). We provide some pictorial comparisons between the 
two bounds in Section \ref{compare}.   

Finally, in Section \ref{sec:3}, we show that similar ideas apply to Poisson random variables. 
In particular, we obtain the following. \\

\begin{theorem}\label{poibound}
Let $Y\sim \text{Poi}(\mu)$, where $\mu>0$, and fix a positive integer $k$ such that $k\ge\mu$. 
Then
\[ \mathbb{E}\left[Y|Y\ge k\right] \le  k +\frac{\mu}{k+1-\mu}. \] 
Furthermore, if we set $\ell = \lfloor k-\mu \rfloor$ we have 
\[ \mathbb{P}\left[Y \ge k\right] \ge \frac{1}{2}\cdot \left(\frac{\mu}{k+\mu} \right)^{\ell+1} . \]
\end{theorem}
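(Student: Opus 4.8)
The plan is to derive both statements from a single mechanism: the law of the overshoot $Y-k$, conditioned on $\{Y\ge k\}$, is stochastically dominated by a geometric distribution. Throughout write $S_m=\sum_{j\ge m}\mu^j/j!$, so that $\mathbb{P}[Y\ge m]=e^{-\mu}S_m$, and note that the identity $j\,\mu^j/j!=\mu\,\mu^{j-1}/(j-1)!$ gives $\mathbb{E}[Y\mid Y\ge j]=\mu\,S_{j-1}/S_j$. Since $Y-k$ is nonnegative and integer-valued on $\{Y\ge k\}$, I would first record the tail-sum expression
\[
\mathbb{E}[Y\mid Y\ge k]=k+\sum_{i\ge 1}\mathbb{P}[Y\ge k+i\mid Y\ge k]=k+\sum_{i\ge 1}\frac{S_{k+i}}{S_k},
\]
which reduces the first assertion to controlling the conditional survival ratios $S_{m+1}/S_m$.

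The engine is a uniform bound on the reciprocal hazard $R_m:=S_m/(\mu^m/m!)$. Expanding $R_m=\sum_{i\ge0}\prod_{l=1}^{i}\frac{\mu}{m+l}$ and using $\frac{\mu}{m+l}\le\frac{\mu}{m+1}$ for each $l\ge1$, a comparison with a geometric series yields $R_m\le\frac{m+1}{m+1-\mu}$ for every $m\ge\mu$. Hence the hazard rate satisfies $(\mu^m/m!)/S_m=R_m^{-1}\ge\frac{m+1-\mu}{m+1}$, so that
\[
\frac{S_{m+1}}{S_m}=1-\frac{\mu^m/m!}{S_m}\le\frac{\mu}{m+1}\le\frac{\mu}{k+1}=:q<1\qquad(m\ge k).
\]
Iterating over $m=k,\dots,k+i-1$ gives $\mathbb{P}[Y\ge k+i\mid Y\ge k]=\prod_{m=k}^{k+i-1}S_{m+1}/S_m\le q^{\,i}$, the promised geometric domination; substituting into the tail-sum above and summing $\sum_{i\ge1}q^{\,i}=q/(1-q)=\mu/(k+1-\mu)$ delivers the first statement.

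For the tail probability I would use the factorization advertised in the introduction. The identity $\mathbb{E}[Y\mid Y\ge j]=\mu S_{j-1}/S_j$ rearranges to $\mathbb{P}[Y\ge j]=\mathbb{P}[Y\ge j-1]\cdot\mu/\mathbb{E}[Y\mid Y\ge j]$, and telescoping from $j=k$ down to $j=k-\ell$ produces
\[
\mathbb{P}[Y\ge k]=\mathbb{P}[Y\ge k-\ell-1]\cdot\prod_{j=k-\ell}^{k}\frac{\mu}{\mathbb{E}[Y\mid Y\ge j]}.
\]
Since $\ell=\lfloor k-\mu\rfloor$, every index obeys $j\ge k-\ell\ge\mu$, so the first statement applies and gives $\mathbb{E}[Y\mid Y\ge j]\le j+\frac{\mu}{j+1-\mu}$. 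It then suffices that $j+\frac{\mu}{j+1-\mu}\le k+\mu$ on the range $k-\ell\le j\le k$; clearing denominators, this is equivalent to $(j+1-\mu)(k+\mu-j)\ge\mu$, which holds term by term because $j+1-\mu\ge1$ and $k+\mu-j\ge\mu$ there. Thus each of the $\ell+1$ factors is at least $\mu/(k+\mu)$, bounding the product below by $(\mu/(k+\mu))^{\ell+1}$.

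The remaining, and I expect most delicate, point is the base factor $\mathbb{P}[Y\ge k-\ell-1]$, where $k-\ell-1<\mu$. Here I would appeal to the classical lower bound on the Poisson median, that the median of $\text{Poi}(\mu)$ is at least $\mu-\ln2$. Since $k-\ell-1$ is an integer below $\mu$ we have $k-\ell-1\le\lfloor\mu\rfloor$, while $\mu-\ln2>\lfloor\mu\rfloor-1$ forces the (integer) median to be at least $\lfloor\mu\rfloor$; monotonicity then gives $\mathbb{P}[Y\ge k-\ell-1]\ge\mathbb{P}[Y\ge\lfloor\mu\rfloor]\ge\tfrac12$. Multiplying this by the product bound yields $\mathbb{P}[Y\ge k]\ge\tfrac12(\mu/(k+\mu))^{\ell+1}$. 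The only care needed is to make the median input rigorous—equivalently, to verify $\mathbb{P}[\text{Poi}(m)\ge m]\ge\tfrac12$ for integer $m$, which follows from the Poisson–Gamma identity $\mathbb{P}[\text{Poi}(\mu)\ge m]=\mathbb{P}[\text{Gamma}(m,1)\le\mu]$—and to dispatch the trivial boundary case $k-\ell-1=0$, where the base factor is simply $1$.
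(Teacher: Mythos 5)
Your proposal is correct and follows essentially the same route as the paper: the tail-sum expression for the overshoot plus the geometric bound $\mathbb{P}\left[Y\ge m+1\right]/\mathbb{P}\left[Y\ge m\right]\le \mu/(m+1)$ for the first statement, and the telescoping factorization through $\mathbb{E}\left[Y\,|\,Y\ge j\right]$ together with the median bound $\mu-\ln 2$ for the second. The only differences are cosmetic: you obtain the ratio bound by a direct geometric-series comparison of the reciprocal hazard where the paper reads it off the identity $\mathbb{E}\left[Y\,|\,Y\ge k\right]=\mu\,\mathbb{P}\left[Y\ge k-1\right]/\mathbb{P}\left[Y\ge k\right]$ combined with $\mathbb{E}\left[Y\,|\,Y\ge k\right]\ge k$, and you spell out the integer-rounding step in applying the median lemma (a point the paper's proof glosses over).
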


Notice that when $k$ satisfies $\mu\le k < \mu +1$ then we have $k+\mu <2\mu+1$ as well as $\ell =0$ and the  lower bound of the previous  result reduces to 
\begin{equation}\label{eqq:2} 
\mathbb{P}\left[Y\ge \mu\right] \ge \frac{1}{2}\cdot \frac{\mu}{2\mu+1} , \; \text{for}\; Y\sim \text{Poi}(\mu),  
\end{equation}
and the right hand side is larger than $1/5$, when $\mu\ge 2$.

\section{Proof of  Theorem \ref{mainthm}}\label{sec:2}

In this section we prove  Theorem \ref{mainthm}. 
Let us begin by recalling the following well-known result regarding the median of a binomial random variable.  \\

\begin{theorem}[see \cite{Kaas}]
\label{kaas} Let $X \sim \text{Bin}(n,p)$. Then 
$\mathbb{P}\left[ X\geq \lfloor np\rfloor \right] \geq 1/2$.
\end{theorem}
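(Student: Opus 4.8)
The plan is to read the inequality as a statement about the \emph{median} of $X$. Setting $m=\lfloor np\rfloor$ (the case $m=0$ being trivial, since then $\mathbb{P}[X\ge 0]=1$), the assertion $\mathbb{P}[X\ge m]\ge 1/2$ says precisely that $m$ sits at or below a median of $X$. My first step would be to replace the discrete tail by a continuous one through the order-statistic identity
\[ \mathbb{P}[X\ge m]=\mathbb{P}[Z\le p],\qquad Z\sim\text{Beta}(m,\,n-m+1), \]
which holds because $Z$ is distributed as the $m$-th smallest of $n$ independent uniforms on $[0,1]$, and the event $\{Z\le p\}$ is exactly the event that at least $m$ of those uniforms fall below $p$, i.e. that $\text{Bin}(n,p)\ge m$. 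Thus the claim is equivalent to showing that $p$ lies at or above the median of $\text{Beta}(m,\,n-m+1)$.

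The natural lever is the mean of this Beta law, namely
\[ \mathbb{E}[Z]=\frac{m}{n+1}\le\frac{np}{n+1}<p, \]
so $p$ always strictly exceeds the mean of $Z$. If one could show that the median of $Z$ never exceeds its mean, the proof would be finished immediately. Here lies the main obstacle: for $\text{Beta}(a,b)$ the median falls below the mean only in the positively skewed regime $a\le b$ (for us, when $\lfloor np\rfloor\lesssim n/2$, essentially $p\lesssim 1/2$), while in the complementary regime the comparison runs the wrong way, and the crude mean bound no longer suffices. So the hard part is a \emph{sharp} comparison of the Beta median with $p$ in the regime where skewness is unfavourable; there one must exploit the actual size of the gap between $p$ and $\mathbb{E}[Z]$ rather than merely its sign.

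To organize the two regimes I would use the symmetry $X\mapsto n-X$, which sends $\text{Bin}(n,p)$ to $\text{Bin}(n,1-p)$ and, since $n-\lfloor np\rfloor=\lceil n(1-p)\rceil$, converts the lower-tail statement at parameter $p$ into the upper-tail statement $\mathbb{P}[\text{Bin}(n,1-p)\le\lceil n(1-p)\rceil]\ge 1/2$; this isolates the favourable-skew case and shows that the whole theorem reduces to controlling the one unfavourable regime above. Pushing that regime through rigorously is exactly the delicate estimate supplied by the cited reference \cite{Kaas}, which obtains the stronger sandwich $\lfloor np\rfloor\le\text{median}\le\lceil np\rceil$ via an exact formula for the mean absolute deviation $\mathbb{E}|X-np|$ together with the fact that the median minimizes $c\mapsto\mathbb{E}|X-c|$; I would invoke that finer input to close the gap left open by the elementary mean comparison.
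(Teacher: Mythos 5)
You should first note what the paper itself does with this statement: it gives \emph{no} proof at all, importing the result wholesale from the cited reference of Kaas and Buhrman. So the only honest benchmark is whether your proposal constitutes a self-contained proof, and it does not. Your elementary chain --- the order-statistic identity $\mathbb{P}[X\ge m]=\mathbb{P}[Z\le p]$ with $Z\sim\text{Beta}(m,n-m+1)$, the mean computation $\mathbb{E}[Z]=m/(n+1)<p$, and the comparison median $\le$ mean --- is correct as far as it goes, but as you yourself concede it only covers the regime $m\le n-m+1$, i.e.\ roughly $p\le 1/2$. Your plan for the complementary regime is to ``invoke the finer input'' of \cite{Kaas}, namely the sandwich $\lfloor np\rfloor\le\text{median}\le\lceil np\rceil$. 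But the left half of that sandwich \emph{is} the statement being proven: for an integer-valued random variable, $\mathbb{P}[X\ge\lfloor np\rfloor]\ge 1/2$ is exactly the assertion that $\lfloor np\rfloor$ lies at or below a median. So the closing step is circular --- it proves the theorem by citing the theorem. If citation is permitted, the entire Beta apparatus is superfluous (that is precisely what the paper does); if it is not, the hard half of the parameter range is simply missing.

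Two further soft spots deserve naming. First, even in the favourable regime your argument rests on the inequality median $\le$ mean for $\text{Beta}(a,b)$ with $a\le b$, which is a true but genuinely nontrivial fact (of the Groeneveld--Meeden type); it is not obviously easier than the binomial statement itself, and several proofs of the binomial median theorem in the literature run through exactly this kind of analysis, so you would need to prove or properly source it. Second, the symmetry $X\mapsto n-X$ does less than your write-up suggests: it transforms the claim at parameter $p$ into the equivalent dual claim $\mathbb{P}\left[\text{Bin}(n,1-p)\le\lceil n(1-p)\rceil\right]\ge 1/2$, and the parameters where the Beta argument fails for the original claim map exactly onto the parameters where it fails for the dual. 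The rewriting therefore does not shrink the set of unproven cases at all; it is bookkeeping, not a reduction. To make the proposal a genuine proof you would have to actually carry out, rather than cite, an argument for the regime $p>1/2$ --- for instance the Kaas--Buhrman route via the exact formula for $\mathbb{E}|X-np|$ combined with the characterization of medians as minimizers of $c\mapsto\mathbb{E}|X-c|$.
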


A basic ingredient in the proof of Theorem \ref{mainthm} is the following relation 
between the tails of a binomial random variable and its tail conditional expectation. \\

\begin{lemma}\label{basiclem} Fix a positive integer $n$ and a real number $p\in (0,1)$. 
For $j=1,\ldots,n$ let $X_j$ be a $\text{Bin}(j,p)$ random variable. 
If $k$ is a positive integer such that $1\leq k \leq n-1$.  Then 
\[ \mathbb{P}\left[X_n\geq k \right] = \frac{np}{\mathbb{E}\left[X_n | X_n\geq k\right]} \cdot 
\mathbb{P}\left[X_{n-1}\geq k-1\right] . \] 
\end{lemma}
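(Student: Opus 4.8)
The plan is to establish the identity by writing out both tail conditional expectation and the two tail probabilities explicitly as sums over binomial probabilities, and then verify that the claimed relation is an algebraic consequence of the recursion satisfied by binomial point probabilities. Write $b(j,i) = \binom{j}{i}p^i(1-p)^{j-i}$ for the probability that $X_j = i$. The key elementary fact I would use is the standard absorption/shift identity
\[
i\cdot b(n,i) = np\cdot b(n-1,i-1),
\]
which follows immediately from $i\binom{n}{i} = n\binom{n-1}{i-1}$. This is the engine that converts a sum weighted by the value $i$ (which is what appears in the conditional expectation) into a shifted tail of a $\text{Bin}(n-1,p)$ variable.

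First I would unwind the definition of the tail conditional expectation:
\[
\mathbb{E}\left[X_n \mid X_n \geq k\right] = \frac{\sum_{i=k}^{n} i\cdot b(n,i)}{\mathbb{P}\left[X_n \geq k\right]}.
\]
Then I would apply the absorption identity termwise to the numerator, obtaining
\[
\sum_{i=k}^{n} i\cdot b(n,i) = np\sum_{i=k}^{n} b(n-1,i-1) = np\sum_{j=k-1}^{n-1} b(n-1,j) = np\cdot \mathbb{P}\left[X_{n-1}\geq k-1\right].
\]
Substituting this back gives
\[
\mathbb{E}\left[X_n \mid X_n \geq k\right] = \frac{np\cdot \mathbb{P}\left[X_{n-1}\geq k-1\right]}{\mathbb{P}\left[X_n \geq k\right]},
\]
which rearranges at once into the claimed identity. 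The hypothesis $k \leq n-1$ guarantees that $\mathbb{P}\left[X_{n-1}\geq k-1\right]$ is a genuine tail with at least one term and, together with $k \geq 1$, that all index shifts stay within range; I would note in passing that $\mathbb{P}\left[X_n \geq k\right] > 0$ so the conditional expectation and the division are well defined.

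The bulk of the work is purely bookkeeping, so I do not anticipate a serious conceptual obstacle; the only point requiring care is the index shift $i \mapsto i-1$ in the summation and checking that the endpoints $i=k$ and $i=n$ map correctly onto $j=k-1$ and $j=n-1$, so that the resulting sum is exactly the tail $\mathbb{P}\left[X_{n-1}\geq k-1\right]$ and not off by a boundary term. I would therefore present the proof as the clean three-line computation above, flagging the absorption identity as the crux and verifying the summation limits explicitly rather than leaving them implicit.
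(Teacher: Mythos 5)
Your proof is correct. It takes a mildly different route from the paper's: you establish the key identity $\sum_{i\geq k} i\,\mathbb{P}[X_n=i] = np\,\mathbb{P}[X_{n-1}\geq k-1]$ purely algebraically, via the absorption identity $i\binom{n}{i}=n\binom{n-1}{i-1}$ applied termwise to the explicit binomial probabilities, whereas the paper obtains the same identity probabilistically by writing $X_n=\sum_i B_i$ as a sum of i.i.d.\ Bernoulli variables and using linearity of conditional expectation, so that $\mathbb{E}[X_n\mid X_n\geq k]=\sum_i \mathbb{P}[B_i=1\mid X_n\geq k]$ and each term equals $p\,\mathbb{P}[X_{n-1}\geq k-1]/\mathbb{P}[X_n\geq k]$. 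The two arguments are essentially translations of one another (the absorption identity is the pmf-level shadow of conditioning on one Bernoulli summand), but yours is more self-contained and mechanical, with the summation limits checked explicitly, while the paper's is shorter and makes the probabilistic source of the identity transparent. Neither buys anything substantive over the other for the purposes of this paper; both yield exactly the displayed relation once one notes, as you do, that $\mathbb{P}[X_n\geq k]>0$ so the conditional expectation is well defined.
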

\begin{proof} 
Since $X_n = \sum_i B_i$ is a sum of $n$ Bernoulli $0/1$ random variables $B_i$ of mean $p$, the linearity of expectation yields  
\begin{eqnarray*} \mathbb{E}\left[X_n | X_n\geq k\right] = \sum_{i=1}^{n} \mathbb{P}\left[B_i =1 | X_n\geq k\right] =  \frac{n p\cdot \mathbb{P}\left[X_{n-1}\geq k-1\right]}{\mathbb{P}\left[X_{n}\geq k\right]}  
\end{eqnarray*}
and the result follows. 
\end{proof}

Notice that Lemma \ref{basiclem} can be iterated. This provides a way to express the tail 
probability of a binomial random variable in terms of  tail conditional expectations. \\

\begin{corollary}\label{cormain} Fix a positive integer $n$ and a real number $p\in (0,1)$.    
For every $j\leq n$, let $X_j$ be a $\text{Bin}(j,p)$ random variable.  If $\ell$ is any  integer from the set 
$\{0,1,\ldots,k-1\}$ then 
\[ \mathbb{P}\left[X_n\geq k\right] = \mathbb{P}\left[X_{n-(\ell +1)} \geq k-(\ell+1)\right] \cdot  \prod_{j=0}^{\ell} \frac{p(n-j)}{\mathbb{E}\left[X_{n-j} \big| X_{n-j}\geq k-j\right]} 
 .\]
\end{corollary}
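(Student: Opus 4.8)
The plan is to prove the identity by induction on $\ell$, with Lemma \ref{basiclem} serving as the single-step engine. For the base case $\ell=0$ the asserted product consists of the single factor indexed by $j=0$, so the claim reads
\[ \mathbb{P}\left[X_n\ge k\right] = \frac{p\,n}{\mathbb{E}\left[X_n | X_n\ge k\right]}\cdot \mathbb{P}\left[X_{n-1}\ge k-1\right], \]
which is exactly the statement of Lemma \ref{basiclem} after a trivial rearrangement of the factors. Hence the base case requires no new work.

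For the inductive step I would assume the identity holds for some $\ell$ with $0\le \ell\le k-2$, so that by the inductive hypothesis
\[ \mathbb{P}\left[X_n\ge k\right] = \mathbb{P}\left[X_{n-(\ell+1)}\ge k-(\ell+1)\right]\cdot \prod_{j=0}^{\ell}\frac{p(n-j)}{\mathbb{E}\left[X_{n-j} | X_{n-j}\ge k-j\right]}. \]
Then I would apply Lemma \ref{basiclem} a single further time to the residual factor $\mathbb{P}\left[X_{n-(\ell+1)}\ge k-(\ell+1)\right]$, reading $n-(\ell+1)$ as the number of trials and $k-(\ell+1)$ as the threshold. This expands that factor as
\[ \frac{p\,(n-(\ell+1))}{\mathbb{E}\left[X_{n-(\ell+1)} | X_{n-(\ell+1)}\ge k-(\ell+1)\right]}\cdot \mathbb{P}\left[X_{n-(\ell+2)}\ge k-(\ell+2)\right]. \]
Substituting this back in, the first factor is precisely the $j=\ell+1$ term that lengthens the product, while the surviving probability $\mathbb{P}\left[X_{n-(\ell+2)}\ge k-(\ell+2)\right]$ is exactly the residual demanded by the statement for $\ell+1$. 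This closes the induction, and the algebraic recombination is automatic.

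The only point requiring genuine care is verifying that the hypotheses of Lemma \ref{basiclem} hold at every application; I expect this boundary bookkeeping to be the main, though modest, obstacle. The lemma requires the threshold to be a positive integer bounded above by the number of trials minus one, so the step at level $\ell$ needs $1\le k-(\ell+1)\le n-(\ell+1)-1$. The lower inequality $k-(\ell+1)\ge 1$ is equivalent to the constraint $\ell\le k-2$ under which the step is performed, and is precisely why the corollary restricts $\ell$ to $\{0,1,\ldots,k-1\}$. The upper inequality reduces to $k\le n-1$, and since the trial count and the threshold both decrease by one at each step, this relation is preserved throughout the iteration once it holds initially. As a sanity check, the extreme case $\ell=k-1$ leaves the residual probability $\mathbb{P}\left[X_{n-k}\ge 0\right]=1$ and thereby recovers the product expression for $C_{n,p,k}$ announced in Section \ref{sec:2}.
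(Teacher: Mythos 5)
Your proof is correct and is essentially the paper's own argument: the paper disposes of the corollary with the single line ``immediate upon iterating Lemma \ref{basiclem} $\ell+1$ times,'' and your induction on $\ell$ is just that iteration made explicit, together with the (correct) verification that the hypotheses of Lemma \ref{basiclem} are preserved at each step.
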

\begin{proof} Immediate upon iterating Lemma \ref{basiclem}  $\ell +1$ times. 
\end{proof}

Hence lower bounds on the tail probability can be provided via  
upper bounds on the tail conditional expectations 
$\mathbb{E}\left[X_{n-j} \big| X_{n-j}\geq k-j\right]$, for $j=0,\ldots,k-1$. 
The next result provides a recursion for tail conditional expectations of integer valued random variables 
and is interesting on its own. \\

\begin{lemma}\label{secondlem}
Let $X$ be an integer valued random variable and fix a positive integer $k$ such that 
$\mathbb{P}\left[X\geq k\right]>0$ and $\mathbb{P}\left[X\geq k+1\right]>0$ . 
Then 
\[ \mathbb{E}\left[X| X\geq k\right] = k \cdot \frac{\mathbb{P}\left[X=k\right]}{\mathbb{P}\left[X\geq k\right]} +  \mathbb{E}\left[X| X\geq k+1\right] \cdot \left(1- \frac{\mathbb{P}\left[X=k\right]}{\mathbb{P}\left[X\geq k\right]}\right)   .\]
\end{lemma}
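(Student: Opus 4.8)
The plan is to reduce the claim to the \emph{law of total expectation}, exploiting the fact that, because $X$ is integer valued, the tail event splits as a disjoint union $\{X\geq k\} = \{X=k\}\,\cup\,\{X\geq k+1\}$.

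First I would unfold the definition of the tail conditional expectation,
\[ \mathbb{E}\left[X\mid X\geq k\right] = \frac{\mathbb{E}\left[X\cdot\mathbf{1}_{\{X\geq k\}}\right]}{\mathbb{P}\left[X\geq k\right]}, \]
and decompose the numerator along the partition above into $k\cdot\mathbb{P}\left[X=k\right]$ (since $X=k$ on the first piece) plus $\mathbb{E}\left[X\cdot\mathbf{1}_{\{X\geq k+1\}}\right]$. The hypothesis $\mathbb{P}\left[X\geq k+1\right]>0$ then lets me rewrite this latter term as $\mathbb{E}\left[X\mid X\geq k+1\right]\cdot\mathbb{P}\left[X\geq k+1\right]$.

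Next I would divide throughout by $\mathbb{P}\left[X\geq k\right]$ and invoke the same disjoint partition once more, now at the level of probabilities, to record that $\mathbb{P}\left[X\geq k+1\right] = \mathbb{P}\left[X\geq k\right] - \mathbb{P}\left[X=k\right]$. Substituting this identity for the coefficient multiplying $\mathbb{E}\left[X\mid X\geq k+1\right]$ turns that coefficient into $1-\mathbb{P}\left[X=k\right]/\mathbb{P}\left[X\geq k\right]$, which is precisely the stated formula: a convex combination of $k$ and $\mathbb{E}\left[X\mid X\geq k+1\right]$ weighted by the conditional probability of landing exactly at $k$.

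I do not anticipate any genuine obstacle, since the result is essentially a bookkeeping identity and the two positivity hypotheses serve only to guarantee that both conditional expectations appearing in the formula are well defined. The single point deserving a moment of care is that the tail event splits into the two specific pieces $\{X=k\}$ and $\{X\geq k+1\}$ with nothing in between; this is exactly where the integer-valuedness of $X$ enters, since for an integer valued random variable one has $\{X>k\}=\{X\geq k+1\}$, and it is this step that makes the recursion down from $k+1$ to $k$ meaningful.
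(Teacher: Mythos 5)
Your proof is correct and takes essentially the same route as the paper's: the paper simply writes $\mathbb{E}[X\mid X\geq k]$ as the explicit sum $k\cdot\frac{\mathbb{P}[X=k]}{\mathbb{P}[X\geq k]}+\sum_{j\ge k+1} j\cdot\frac{\mathbb{P}[X=j]}{\mathbb{P}[X\geq k]}$ and regroups the tail sum into $\mathbb{E}[X\mid X\geq k+1]\cdot\frac{\mathbb{P}[X\geq k+1]}{\mathbb{P}[X\geq k]}$, which is exactly your indicator-function decomposition written out termwise.
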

\begin{proof}
The result follows upon writing  
\begin{eqnarray*}  \mathbb{E}\left[X| X\geq k\right] &=& k\cdot \frac{\mathbb{P}\left[X=k\right]}{\mathbb{P}\left[X\geq k\right]} + \sum_{j\ge k+1} j\cdot \frac{\mathbb{P}\left[X=j\right]}{\mathbb{P}\left[X\geq k+1\right]}  
\cdot \frac{\mathbb{P}\left[X\geq k+1\right]}{\mathbb{P}\left[X\geq k\right]} \\
&=& k \cdot \frac{\mathbb{P}\left[X=k\right]}{\mathbb{P}\left[X\geq k\right]} +  \mathbb{E}\left[X| X\geq k+1\right] \cdot \left(1- \frac{\mathbb{P}\left[X=k\right]}{\mathbb{P}\left[X\geq k\right]}\right) .
\end{eqnarray*} 
\end{proof}

We will also need the following result which provides estimates on ratios of binomial tails. \\

\begin{lemma}\label{feller} Let $X_n \sim \text{Bin}(n,p)$, for $n=1,2,\ldots$. Then for every positive integer $k$ such that  $np<k\leq n$ we have
\[ \mathbb{P}\left[X_n \geq k\right] \leq \frac{k(1-p)}{k-np}\cdot \mathbb{P}\left[X_n = k\right] .  \]
In particular, 
\[ \frac{\mathbb{P}\left[X_n \geq k+1\right]}{\mathbb{P}\left[X_n \geq k\right]} \leq \frac{p(n-k)}{k(1-p)}.  \]
\end{lemma}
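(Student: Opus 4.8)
The plan is to reduce both inequalities to a single geometric domination of the binomial tail by its leading point mass. Write $a_j := \mathbb{P}\left[X_n = j\right] = \binom{n}{j}p^j(1-p)^{n-j}$, so that $\mathbb{P}\left[X_n \ge k\right] = \sum_{j=k}^n a_j$. First I would record the ratio of consecutive point masses,
\[ \frac{a_{j+1}}{a_j} = \frac{(n-j)p}{(j+1)(1-p)}, \]
and observe that the factor $\frac{n-j}{j+1} = \frac{n+1}{j+1} - 1$ is strictly decreasing in $j$. Hence for every $j \ge k$ one has $\frac{n-j}{j+1} \le \frac{n-k}{k+1} \le \frac{n-k}{k}$, which yields the uniform per-step bound
\[ \frac{a_{j+1}}{a_j} \le \frac{(n-k)p}{k(1-p)} =: q, \qquad j \ge k. \]

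Next I would note that the hypothesis $k > np$ forces $q < 1$, since $q<1$ is equivalent to $p(n-k) < k(1-p)$, i.e.\ to $np < k$. Iterating the per-step bound gives $a_{k+i} \le q^i a_k$ for all $i \ge 0$, so the tail is dominated termwise by a convergent geometric series:
\[ \mathbb{P}\left[X_n \ge k\right] = \sum_{i \ge 0} a_{k+i} \le a_k \sum_{i \ge 0} q^i = \frac{a_k}{1-q}. \]
A direct computation of $1 - q = \frac{k(1-p) - (n-k)p}{k(1-p)} = \frac{k - np}{k(1-p)}$ converts this into $\mathbb{P}\left[X_n \ge k\right] \le \frac{k(1-p)}{k-np}\cdot \mathbb{P}\left[X_n = k\right]$, which is the first claimed inequality.

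For the \emph{in particular} statement I would simply exploit the splitting $\mathbb{P}\left[X_n \ge k\right] = \mathbb{P}\left[X_n = k\right] + \mathbb{P}\left[X_n \ge k+1\right]$. Rearranging the inequality just proved gives $\frac{\mathbb{P}\left[X_n = k\right]}{\mathbb{P}\left[X_n \ge k\right]} \ge 1 - q$, and therefore
\[ \frac{\mathbb{P}\left[X_n \ge k+1\right]}{\mathbb{P}\left[X_n \ge k\right]} = 1 - \frac{\mathbb{P}\left[X_n = k\right]}{\mathbb{P}\left[X_n \ge k\right]} \le q = \frac{p(n-k)}{k(1-p)}, \]
as desired.

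The argument involves no real obstacle; the only genuine decision is the choice of the slightly loosened ratio bound $\frac{n-k}{k+1} \le \frac{n-k}{k}$. Retaining $\frac{n-k}{k+1}$ would produce a sharper but less tidy constant $\frac{(k+1)(1-p)}{k+1-(n+1)p}$, whereas the stated clean form $\frac{k(1-p)}{k-np}$ is precisely what makes the common ratio $q$ coincide with the tail-ratio bound, so that the second inequality falls out of the first for free. The one point worth double-checking is the boundary case $k = n$: there the sum is a single term, $q = 0$, and both inequalities hold with equality.
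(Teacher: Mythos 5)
Your proof is correct, and it is self-contained where the paper's is not: the paper does not actually prove the first inequality, but cites it as well known (pointing to Feller's proof ``that employs comparison with a geometric series'' and to Diaconis--Zabell's via Todhunter's formula), whereas you write out that geometric-series argument in full --- the monotone ratio $a_{j+1}/a_j \le q := \frac{p(n-k)}{k(1-p)} < 1$ for $j\ge k$, termwise domination by $a_k q^i$, and summation. In effect you have reconstructed the content of the cited reference. Your derivation of the \emph{in particular} statement from the first inequality, via $\mathbb{P}\left[X_n\ge k+1\right]=\mathbb{P}\left[X_n\ge k\right]-\mathbb{P}\left[X_n=k\right]$, is exactly the paper's. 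The paper additionally gives, ``for the sake of completeness,'' a genuinely different proof of the second inequality that uses the tail-conditional-expectation identity of Lemma \ref{basiclem} (applied with $n+1,k+1$) together with the nonnegativity of $\mathbb{E}\left[\max\{0,X_{n+1}-(k+1)\}\right]$; that route yields the slightly sharper ratio bound $\frac{p(n-k)}{(k+1)(1-p)}$, which is precisely the improvement you flag in your closing remark about retaining the factor $k+1$ (and which corresponds to your constant $\frac{(k+1)(1-p)}{k+1-(n+1)p}$ in the first inequality). So the two approaches trade off as follows: yours is elementary and proves both statements from scratch; the paper's alternative ties the lemma to the machinery of the rest of the article and is marginally sharper, but only addresses the second inequality.
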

\begin{proof} The first statement is a well known result. See, for example, Feller \cite[p. $151$]{Feller} for a proof that employs comparison with a geometric series, or 
Diaconis et al. \cite[Theorem $1$]{Diaconis} for a proof that employs the, so-called, Todhunter's formula. 
The second statement is equivalent to the first and follows upon observing that 
\[  \mathbb{P}\left[X_n \geq k+1\right] = \mathbb{P}\left[X_n \geq k\right] - \mathbb{P}\left[X_n = k\right] . \]
For the sake of completeness, let us provide yet another proof of this result that employs Lemma 
\ref{basiclem}. 
Notice that, if we apply Lemma \ref{basiclem} with $n$ replaced by $n+1$ and $k$ replaced by 
$k+1$, we get  
\begin{eqnarray*} 0\leq   \mathbb{E}\left[\max\{0,X_{n+1}-(k+1) \} \right] &=& \mathbb{E}\left[X_{n+1} -(k+1) | X_{n+1}\geq k+1\right] 
\cdot \mathbb{P}\left[X_{n+1}\geq k+1\right] \\
&=& (n+1)p \cdot \mathbb{P}\left[ X_{n}\geq k \right] - (k+1)\cdot \mathbb{P}\left[X_{n+1} \geq k+1 \right] \\
&=& p(n-k)\cdot \mathbb{P}\left[ X_{n}\geq k \right] - (k+1)(1-p) \cdot \mathbb{P}\left[X_{n} \geq k+1 \right].  
\end{eqnarray*}  
Hence we have shown that 
\[ \frac{\mathbb{P}\left[X_n \geq k+1\right]}{\mathbb{P}\left[X_n \geq k\right]} \leq \frac{p(n-k)}{(k+1)(1-p)}  \] 
and the result follows. 
\end{proof}

We can now proceed with an upper bound on the tail conditional expectation of a binomial random variable. 

\begin{proof}[Proof of Theorem \ref{mainthm}(First statement)]  
Clearly, $\mathbb{E}\left[X | X\geq n\right] = n$. Moreover, elementary calculations show that 
\[ \mathbb{E}\left[X | X\geq n-1\right] = n-1 + \frac{p}{(n-1)(1-p)+p}  \leq n-1 + \frac{p}{n-1-np+p}  \]
and so the result holds true for $k\in \{n-1,n\}$. 
For the remaining values, i.e.  $k\leq n-2$, we prove the result by reverse induction on $k$. 
More precisely, we prove that the inequality holds true for $k>np$ under the assumption that  it
holds true for $k+1$. The assumption 
that the inequality holds true for $k+1$ implies that 
\[\mathbb{E}\left[X | X\geq k+1\right] \leq  k+1 + \frac{(n-k-1)p}{k+1-np+p} . \] 
Now notice that the previous inequality combined with  Lemma \ref{secondlem} yield 
\begin{eqnarray*}  \mathbb{E}\left[X| X\geq k\right] &=& k \cdot \frac{\mathbb{P}\left[X=k\right]}{\mathbb{P}\left[X\geq k\right]} +  \mathbb{E}\left[X| X\geq k+1\right] \cdot \frac{\mathbb{P}\left[X\geq k+1\right]}{\mathbb{P}\left[X\geq k\right]}  \\
&\leq& k + \left(1 + \frac{(n-k-1)p}{k+1-np+p}\right)\cdot \frac{\mathbb{P}\left[X\geq k+1\right]}{\mathbb{P}\left[X\geq k\right]} \\
&\leq& k + \left(1 + \frac{(n-k-1)p}{k+1-np+p}\right) \cdot\frac{p(n-k)}{k(1-p)} ,
\end{eqnarray*}
where the last estimate follows from Lemma \ref{feller}. 
Now notice that  the result will follow once we show  
\[ \left(1 + \frac{(n-k-1)p}{k+1-np+p}\right) \cdot\frac{1}{k(1-p)} \leq  \frac{1}{k-np+p}  ,  \]
or, equivalently, once we show 
\[ \frac{k+1-kp}{k+1-np+p } \cdot \frac{1}{k(1-p)}  \leq \frac{1}{k-np+p}.  \] 
Since we assume $k< n-1$, it is now easy to verify that the last inequality holds true and the proof is complete. 
\end{proof}

We are now ready to prove the second statement of Theorem \ref{mainthm}. The proof combines the previous upper bound on 
tail conditional expectations with Corollary \ref{cormain}.

\begin{proof}[Proof of Theorem \ref{mainthm} (Second statement)] 
Notice that $\ell = \lfloor \frac{k-np}{1-p} \rfloor$ is the maximum 
integer $j$ such that $k-j \geq (n-j)p$. 
From Corollary \ref{cormain} we know that 
\[ \mathbb{P}\left[X_n\geq k\right] = \mathbb{P}\left[X_{n-(\ell +1)} \geq k-(\ell+1)\right] \cdot  \prod_{j=0}^{\ell} \frac{p(n-j)}{\mathbb{E}\left[X_{n-j} \big| X_{n-j}\geq k-j\right]}  , 
\]
where $X_{n-j}\sim \text{Bin}(n-j,p)$, for $j=0,\ldots, \ell$. 
Since $k-(\ell +1)$ is a positive integer such that $k-(\ell +1) < (n-(\ell +1))p$, 
Theorem \ref{kaas} yields   
\[ \mathbb{P}\left[X_{n-(\ell +1)} \geq k-(\ell+1)\right]  \geq \frac{1}{2} . \]
Now notice that the first statement of  Theorem \ref{mainthm} combined with the fact that $j\le \ell <k$ imply 
\[ \mathbb{E}\left[X_{n-j} \big| X_{n-j}\geq k-j\right] \le k-j + \frac{n-k}{k-j-(n-j)p+p}  \le k-j + \frac{n-j}{k-j-(n-j)p+p}  \]
and therefore we have  
\[  \frac{p(n-j)}{\mathbb{E}\left[X_{n-j} \big| X_{n-j}\geq k-j\right]}  \geq   \frac{(n-j)\cdot p\cdot (k-j-(n-j)p)+ (n-j)p^2  }{(k-j)\cdot (k-j-(n-j)p)+(n-j)p} ,\; \text{for}\; j=0,\ldots, \ell .\] 
Since $k-j \geq (n-j)p$, for $j=0,\ldots,\ell$,  
straightforward calculations show that   
\[ \frac{(n-j)\cdot p\cdot (k-j-(n-j)p)+ (n-j)p^2  }{(k-j)\cdot (k-j-(n-j)p)+(n-j)p} \geq \frac{n-j}{k-j}\cdot p^2 . \] 
Putting all the above together, we conclude that 
\[ \mathbb{P}\left[X_n\geq k\right] \geq \frac{1}{2}\cdot p^{2(\ell+1)}\prod_{j=0}^{\ell} \frac{n-j}{k-j}  \]
and the result follows. 
\end{proof}

\section{Proof of Theorem \ref{poibound}}\label{sec:3} 

In this section we prove Theorem \ref{poibound}.  We begin with a result that expresses the tail conditional expectation of a Poisson random variable in terms of a ratio between consecutive tails. \\

\begin{lemma}\label{poilem:1}
Let $Y\sim\text{Poi}(\mu)$, where $\mu>0$, and fix a positive integer $k$. 
Then 
\[\mathbb{P}\left[Y\ge k\right] = \mu\cdot \frac{\mathbb{P}\left[Y\ge k-1\right]}{\mathbb{E}\left[Y| Y\ge k\right] } . \]
In particular, we have 
\[ \frac{\mathbb{P}\left[Y\ge k\right]}{\mathbb{P}\left[Y\ge k-1\right]} \le \frac{\mu}{k} . \]
\end{lemma}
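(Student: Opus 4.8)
The plan is to compute the tail conditional expectation directly from its definition and then exploit the characteristic recursion satisfied by the Poisson probability mass function. Recall that for $Y\sim\text{Poi}(\mu)$ one has $\mathbb{P}\left[Y=j\right]=e^{-\mu}\mu^j/j!$, and hence the elementary arithmetic identity $j\cdot\mathbb{P}\left[Y=j\right]=\mu\cdot\mathbb{P}\left[Y=j-1\right]$ for every integer $j\ge 1$. This is the Poisson analogue of the device used to prove Lemma \ref{basiclem}: whereas there $X_n$ was split into Bernoulli indicator summands so that linearity of expectation could be applied, here the relation between consecutive point masses plays exactly the same structural role.

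First I would write $\mathbb{E}\left[Y\mid Y\ge k\right]\cdot\mathbb{P}\left[Y\ge k\right]=\sum_{j\ge k}j\cdot\mathbb{P}\left[Y=j\right]$. Applying the mass-function recursion termwise converts each summand $j\cdot\mathbb{P}\left[Y=j\right]$ into $\mu\cdot\mathbb{P}\left[Y=j-1\right]$, so the sum becomes $\mu\sum_{j\ge k}\mathbb{P}\left[Y=j-1\right]$. Re-indexing by $i=j-1$ then collapses this to $\mu\cdot\mathbb{P}\left[Y\ge k-1\right]$. Dividing through by $\mathbb{E}\left[Y\mid Y\ge k\right]$, which is strictly positive since $k\ge 1$, yields the first claimed identity.

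The second statement then follows immediately. Since conditioning on the event $\{Y\ge k\}$ forces $Y\ge k$, we have the trivial lower bound $\mathbb{E}\left[Y\mid Y\ge k\right]\ge k$. Substituting this into the identity just established gives
\[
\frac{\mathbb{P}\left[Y\ge k\right]}{\mathbb{P}\left[Y\ge k-1\right]}=\frac{\mu}{\mathbb{E}\left[Y\mid Y\ge k\right]}\le\frac{\mu}{k},
\]
as required.

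There is no genuine obstacle here; the only point demanding care is the index shift in the middle step, where one must confirm that the summation limit $j\ge k$ becomes $i\ge k-1$ with no boundary term lost — this is precisely where the hypothesis $k\ge 1$ enters, ensuring that $\mathbb{P}\left[Y=j-1\right]$ is well defined for the smallest index. I expect this lemma to play the combined role of Lemma \ref{basiclem} and Lemma \ref{feller} in the Poisson setting, feeding the recursive tail identity and its ratio bound into the argument that drives the lower bound of Theorem \ref{poibound}.
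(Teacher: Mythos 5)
Your proof is correct and follows essentially the same route as the paper's: both rest on the identity $j\cdot\mathbb{P}\left[Y=j\right]=\mu\cdot\mathbb{P}\left[Y=j-1\right]$, apply it termwise to $\sum_{j\ge k}j\cdot\mathbb{P}\left[Y=j\right]$, re-index to obtain $\mu\cdot\mathbb{P}\left[Y\ge k-1\right]$, and then derive the ratio bound from the trivial inequality $\mathbb{E}\left[Y\mid Y\ge k\right]\ge k$. No issues.
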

\begin{proof}
Notice that for all $i\ge 1$, we have 
\[ i\cdot\mathbb{P}\left[Y=i\right] =\mu\cdot \mathbb{P}\left[Y=i-1\right] .\]
This implies that
 \[ \mathbb{E}\left[Y| Y\ge k\right] = \sum_{i=k}^{\infty} i\cdot \frac{\mathbb{P}\left[Y= i\right]}{\mathbb{P}\left[Y\ge k\right]} = \mu\cdot\sum_{i=k}^{\infty} \frac{\mathbb{P}\left[Y= i-1\right]}{\mathbb{P}\left[Y\ge k\right]}  = 
  \mu\cdot \frac{\mathbb{P}\left[Y\ge k-1\right]}{\mathbb{P}\left[Y\ge k\right] } \]
and the first statement follows. The second statement follows from the last equation, upon observing that  $\mathbb{E}\left[Y| Y\ge k\right]\ge k$. 
\end{proof}

We are now ready to prove the first statement of Theorem \ref{poibound}. 

\begin{proof}[Proof of Theorem \ref{poibound} (First statement)]
Notice that 
\begin{eqnarray*}
\mathbb{E}\left[Y| Y\ge k\right] &=& \sum_{i=k}^{\infty} i\cdot \frac{\mathbb{P}\left[Y= i\right]}{\mathbb{P}\left[Y\ge k\right]} \\
&=& k + \sum_{j=1}^{\infty} j \cdot \frac{\mathbb{P}\left[Y=k+j\right]}{\mathbb{P}\left[Y\ge k\right]} \\
&=& k + \sum_{j=1}^{\infty} \frac{\mathbb{P}\left[Y\ge k+j\right]}{\mathbb{P}\left[Y\ge k\right]} .
\end{eqnarray*}
Now notice that the second statement of Lemma \ref{poilem:1} implies 
\[ \frac{\mathbb{P}\left[Y\ge k+j\right]}{\mathbb{P}\left[Y\ge k\right]}  = \prod_{\ell = 0}^{j-1} \frac{\mathbb{P}\left[Y\ge k+\ell+1\right]}{\mathbb{P}\left[Y\ge k+ \ell\right]} \le  \frac{\mu^j}{\prod_{\ell =1}^{j}(k+\ell)} .\]
Since $\prod_{\ell =1}^{j}(k+\ell) \ge (k+1)^j$ we conclude
\[  \sum_{j=1}^{\infty} \frac{\mathbb{P}\left[Y\ge k+j\right]}{\mathbb{P}\left[Y\ge k\right]} \le \sum_{j=1}^{\infty} \left(\frac{\mu}{k+1}\right)^j = \frac{1}{1-\frac{\mu}{k+1}}-1 =\frac{\mu}{k+1-\mu} .  \]
and the result follows. 
\end{proof}

Recall the following, well-known, result regarding the median of a Poisson random variable (see \cite{Choi}). \\

\begin{lemma}\label{medpoi}
Let $Y\sim\text{Poi}(\mu)$. Then $\mathbb{P}\left[Y \ge \mu -\ln 2\right] \ge 1/2$. 
\end{lemma}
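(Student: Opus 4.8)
The plan is to pass from the given non-integer threshold to an integer one and then to invoke the sharp relation between Poisson tails and the median of a gamma law. If $\mu\le \ln 2$, then the event $\{Y\ge \mu-\ln 2\}$ has probability one (since $Y\ge 0$), so I may assume $\mu>\ln 2$ and set $n:=\lceil \mu-\ln 2\rceil\ge 1$. Because $Y$ is integer valued, $\mathbb{P}\left[Y\ge \mu-\ln 2\right]=\mathbb{P}\left[Y\ge n\right]$, and the definition of the ceiling gives $\mu>n-1+\ln 2$. Thus it suffices to prove that, for each integer $n\ge 1$, one has $\mathbb{P}\left[\text{Poi}(\mu)\ge n\right]\ge 1/2$ whenever $\mu\ge n-1+\ln 2$.

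First I would record the classical Poisson--gamma duality, which follows from repeated integration by parts of the incomplete gamma function: for every integer $n\ge 1$,
\[ \mathbb{P}\left[\text{Poi}(\mu)\ge n\right]=\frac{1}{(n-1)!}\int_0^{\mu}t^{n-1}e^{-t}\,dt=\mathbb{P}\left[\Gamma_n\le \mu\right], \]
where $\Gamma_n$ is a $\text{Gamma}(n,1)$ random variable. Since the right-hand side is increasing in $\mu$, the inequality $\mathbb{P}\left[\text{Poi}(\mu)\ge n\right]\ge 1/2$ is equivalent to $\mu\ge \nu_n$, where $\nu_n$ denotes the median of $\Gamma_n$. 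In view of the reduction above, the whole lemma therefore reduces to the sharp bound on the gamma median
\[ \nu_n\le n-1+\ln 2\qquad\text{for every integer }n\ge 1. \]
This estimate is best possible: at $n=1$ the law $\Gamma_1$ is the standard exponential, whose median is exactly $\ln 2$, so equality holds. The tightness at $n=1$ already signals that no crude tail estimate (a Chernoff bound, or bounding the integrand by its value at an endpoint) can succeed, and this sharp bound is precisely the content of the result of Choi \cite{Choi}; one legitimate option is simply to quote it.

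For a self-contained argument I would prove $\nu_n\le n-1+\ln 2$ by induction on $n$, using the tail recursion obtained from the same integration by parts,
\[ \mathbb{P}\left[\text{Poi}(x)\ge n+1\right]=\mathbb{P}\left[\text{Poi}(x)\ge n\right]-\frac{x^{n}e^{-x}}{n!}. \]
Writing $h_n(\mu):=\mathbb{P}\left[\text{Poi}(\mu)\ge n\right]$ and $\mu_n:=n-1+\ln 2$, the base case is $h_1(\mu_1)=1-e^{-\ln 2}=1/2$, and combining the recursion with $h_n'(\mu)=\mathbb{P}\left[\text{Poi}(\mu)=n-1\right]$ turns the desired monotonicity $h_{n+1}(\mu_{n+1})\ge h_n(\mu_n)$ into the single inequality
\[ \int_{\mu_n}^{\mu_n+1}\mathbb{P}\left[\text{Poi}(t)=n-1\right]\,dt\ \ge\ \mathbb{P}\left[\text{Poi}(\mu_n+1)=n\right]. \]
The main obstacle is exactly this inequality: on the interval $[\mu_n,\mu_n+1]$ the integrand $t^{n-1}e^{-t}/(n-1)!$ is decreasing (its mode $n-1$ lies to the left), yet replacing it by its right-endpoint value is too lossy to conclude, so the integral must be estimated with genuine care --- for instance by exploiting the log-concavity of $t\mapsto t^{n-1}e^{-t}$ to obtain a lower bound strictly sharper than the endpoint value, or by directly tracking the monotonicity of $n-\nu_n$ as Choi does. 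Everything else in the argument is routine.
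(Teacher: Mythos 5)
The paper does not actually prove this lemma: it is stated as a known fact with a bare citation to \cite{Choi}. Your proposal does more work than the paper in one respect and less in another. The reduction you carry out --- disposing of $\mu\le\ln 2$, replacing the threshold by $n=\lceil\mu-\ln 2\rceil$, invoking the Poisson--gamma duality $\mathbb{P}[\mathrm{Poi}(\mu)\ge n]=\mathbb{P}[\Gamma_n\le\mu]$, and concluding that the lemma is exactly equivalent to the gamma-median bound $\nu_n\le n-1+\ln 2$ --- is correct, and correctly identifies that bound as the content of Choi's theorem; quoting \cite{Choi} at that point closes the argument in precisely the same way the paper does, so your primary route is essentially the paper's. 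Your alternative, self-contained induction is set up correctly (the base case, the identity $h_n'(\mu)=\mathbb{P}[\mathrm{Poi}(\mu)=n-1]$, and the reduction of $h_{n+1}(\mu_{n+1})\ge h_n(\mu_n)$ to the displayed integral inequality are all right, and that inequality is indeed true but very tight at $n=1$), but as you yourself acknowledge, the key inequality is left unproved, so the self-contained version is not a complete proof. Since you explicitly offer the citation as a legitimate way to finish, the overall argument stands, and it has the added value of making transparent exactly which result of Choi is being used.
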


We now proceed with the proof of the second statement of Theorem \ref{poibound}. 

\begin{proof}[Proof of Theorem \ref{poibound} (Second statement)]
Let $Y\sim \text{Poi}(\mu)$ and fix a positive integer $k>\mu$. Let $\ell := \lfloor k-\mu \rfloor$ and notice that $\ell$ is the maximum positive integer $j$ such that $k-j\ge \mu$. 
If we iterate Lemma \ref{poilem:1} $\ell +1$ times, we obtain 
\[ \mathbb{P}\left[Y\ge k\right] = \mathbb{P}\left[Y\ge k-(\ell +1)\right]\cdot \prod_{j=0}^{\ell} \frac{\mu}{\mathbb{E}\left[Y|Y\ge k-j\right]} . \]
Since $\ell$ is a positive integer such that $k-(\ell +1) < \mu$, Lemma \ref{medpoi} implies that 
\[ \mathbb{P}\left[Y\ge k-(\ell +1)\right] \ge \frac{1}{2} . \]
The first statement of Theorem \ref{poibound} now yields  
\[ \prod_{j=0}^{\ell} \frac{\mu}{\mathbb{E}\left[Y|Y\ge k-j\right]} \ge  \prod_{j=0}^{\ell} \frac{\mu}{k-j+\frac{\mu}{k-j+1-\mu}} = \prod_{j=0}^{\ell} \frac{\mu (k-j+1-\mu)}{(k-j)(k-j+1-\mu)+\mu}.  \]
Since $k-j\ge \mu$, for $j=0,1,\ldots,\ell$, we conclude    
\[  \frac{\mu (k-j+1-\mu)}{(k-j)(k-j+1-\mu) + \mu} \ge \frac{\mu}{(k-j)+ \mu} .\]
Putting all the above together, we conclude 
\[ \mathbb{P}\left[Y\ge k\right] \ge \frac{1}{2}\cdot \prod_{j=0}^{\ell}  \frac{\mu}{(k-j)+ \mu}  \]
and the result follows. 
\end{proof}

\section{Comparisons}\label{compare}

In this section we perform pictorial comparisons between the bound given by Theorem \ref{mainthm} 
and the bound given by (\ref{Ashbound}). In Figure \ref{fig:compare} we fix the values of 
$n$ and $k$ and plot the function $f(p):= \frac{p^{2\ell}}{2}\binom{n}{\ell +1}/\binom{k}{\ell+1} -  
\frac{1}{\sqrt{8k(1-k/n)}}\cdot \text{exp}\left(-n D(k/n||p)\right)$, 
for $p\in (0,k/n)$, where $\ell$ is as in Theorem \ref{mainthm}. For moderate values of $p$ our 
bound is almost equal to the bound given by (\ref{Ashbound}) while for large values of $k$ and $p$ 
our bound is sharper. In most cases, the two bounds are rather close to each other.

\begin{figure}[htb!]
\subfloat[][]{\includegraphics[scale=0.22]{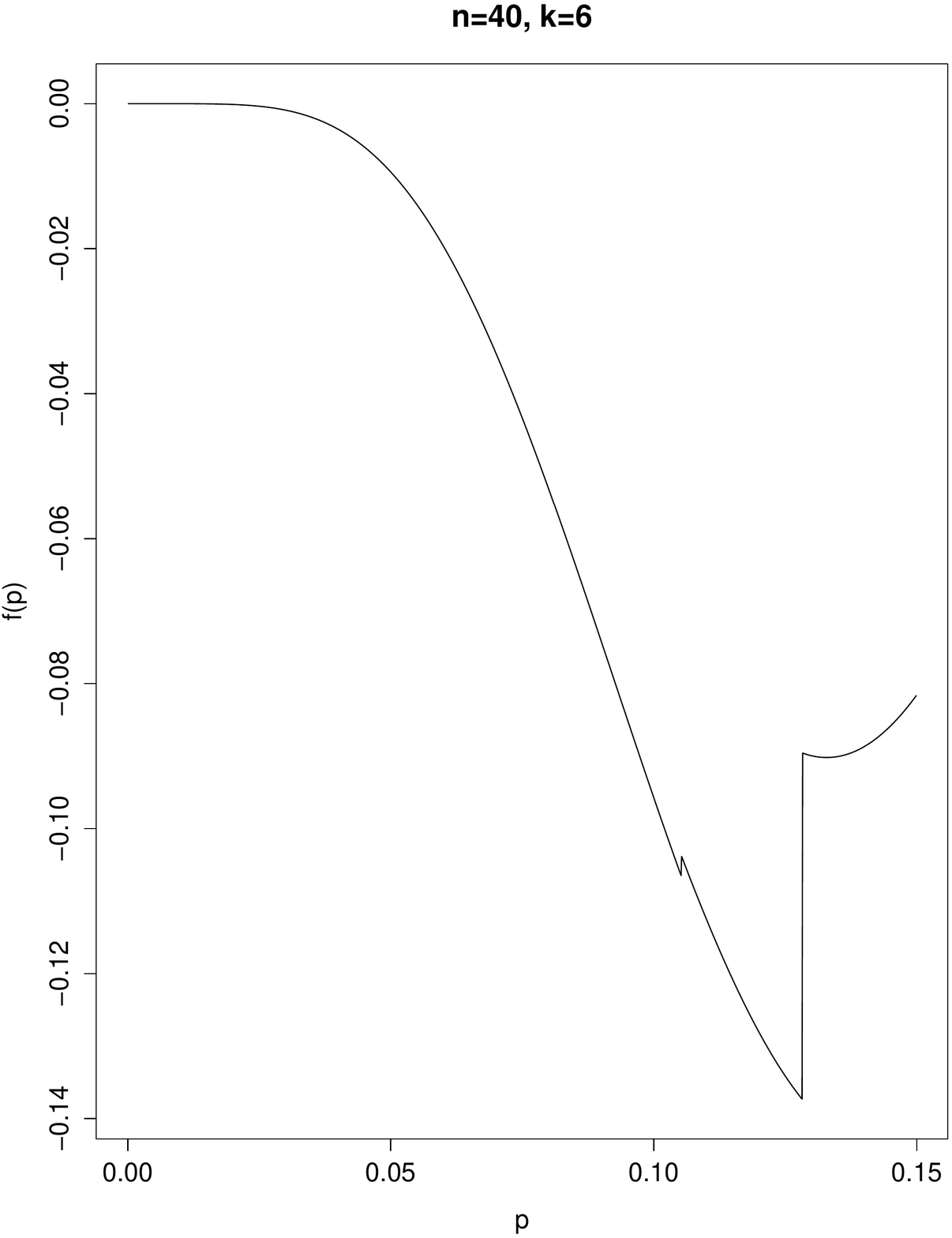}}
  \subfloat[][]{\includegraphics[scale=0.22]{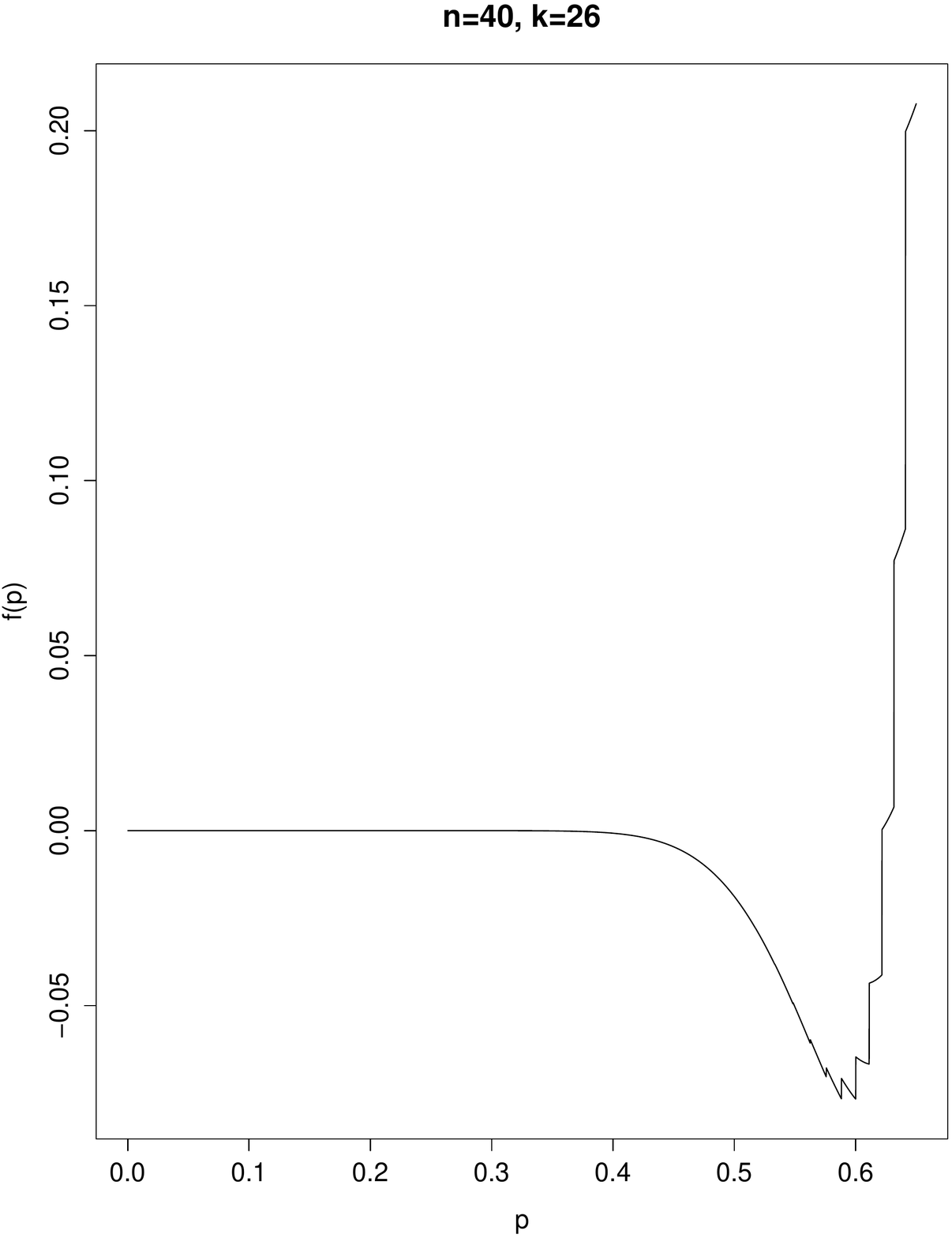}}
   \subfloat[][]{\includegraphics[scale=0.22]{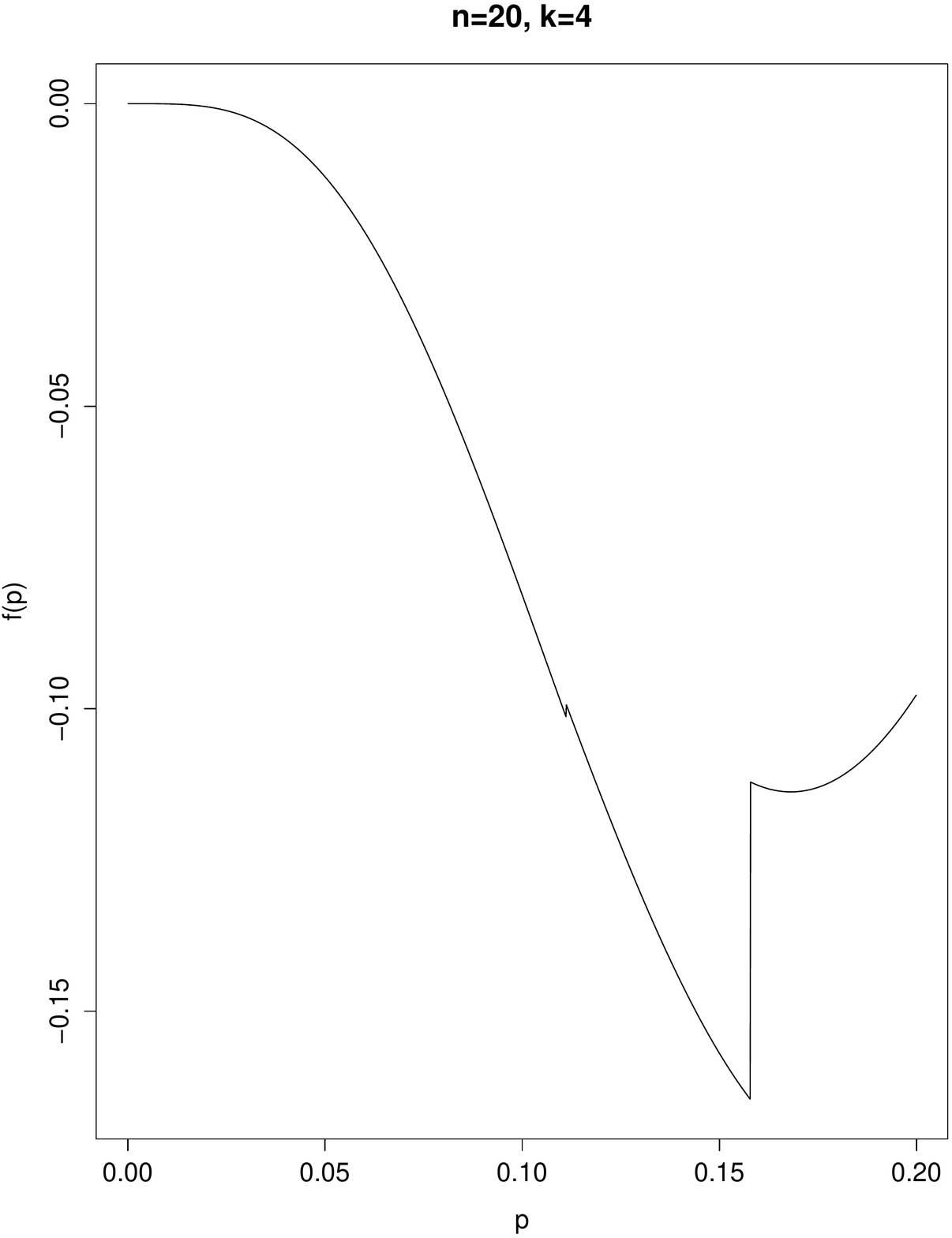}}\\
   \subfloat[][]{\includegraphics[scale=0.22]{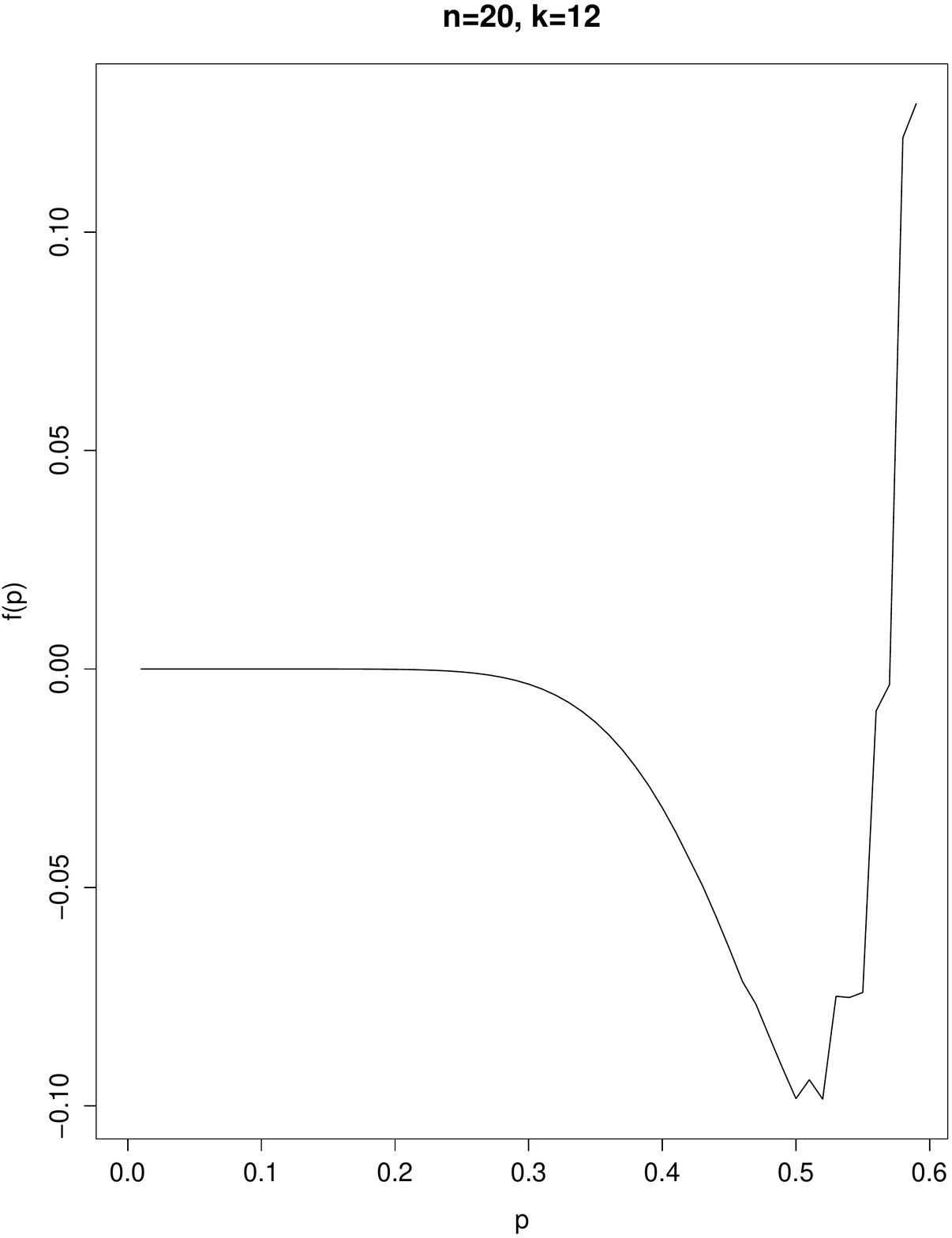}}
   \subfloat[][]{\includegraphics[scale=0.22]{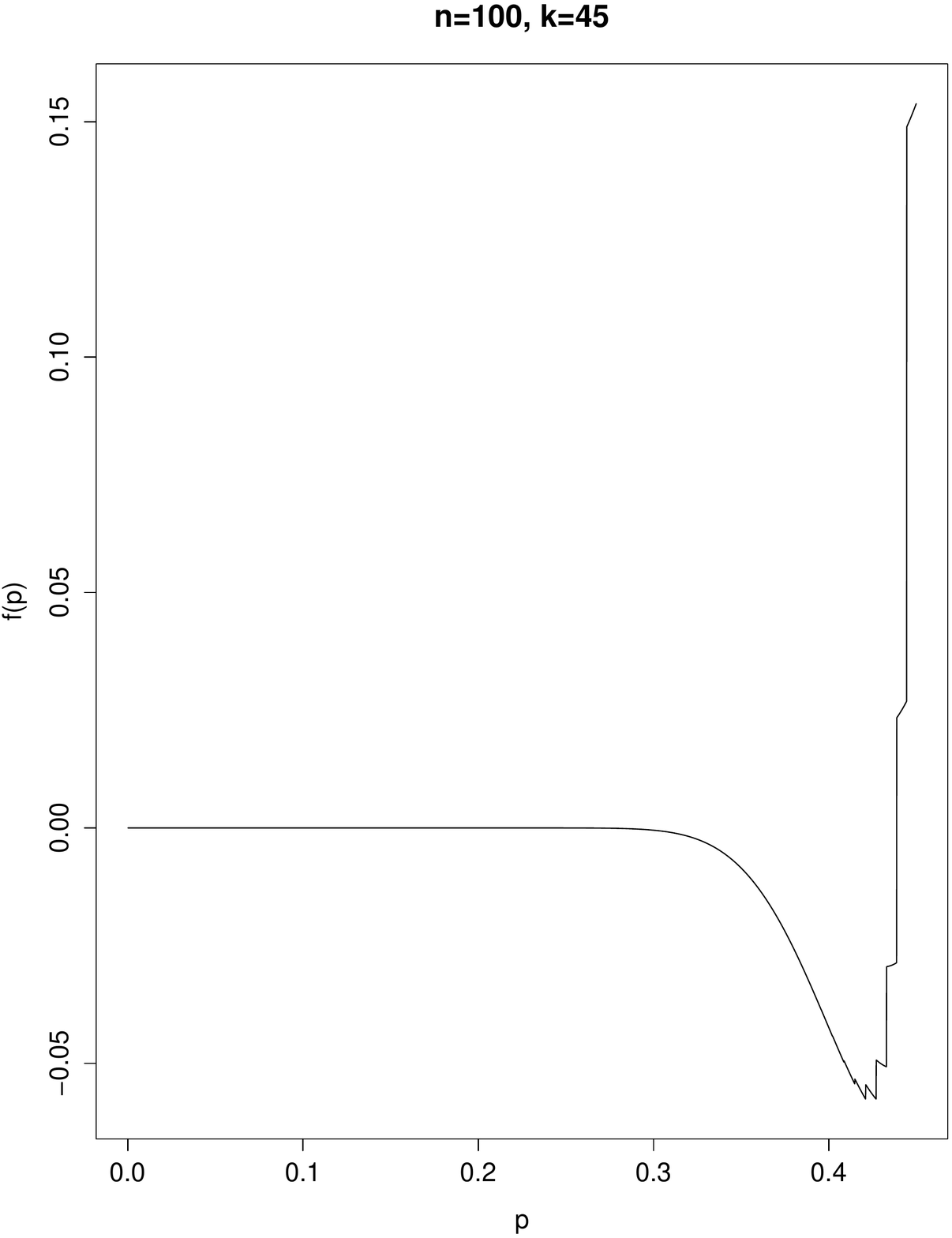}}
   \subfloat[][]{\includegraphics[scale=0.22]{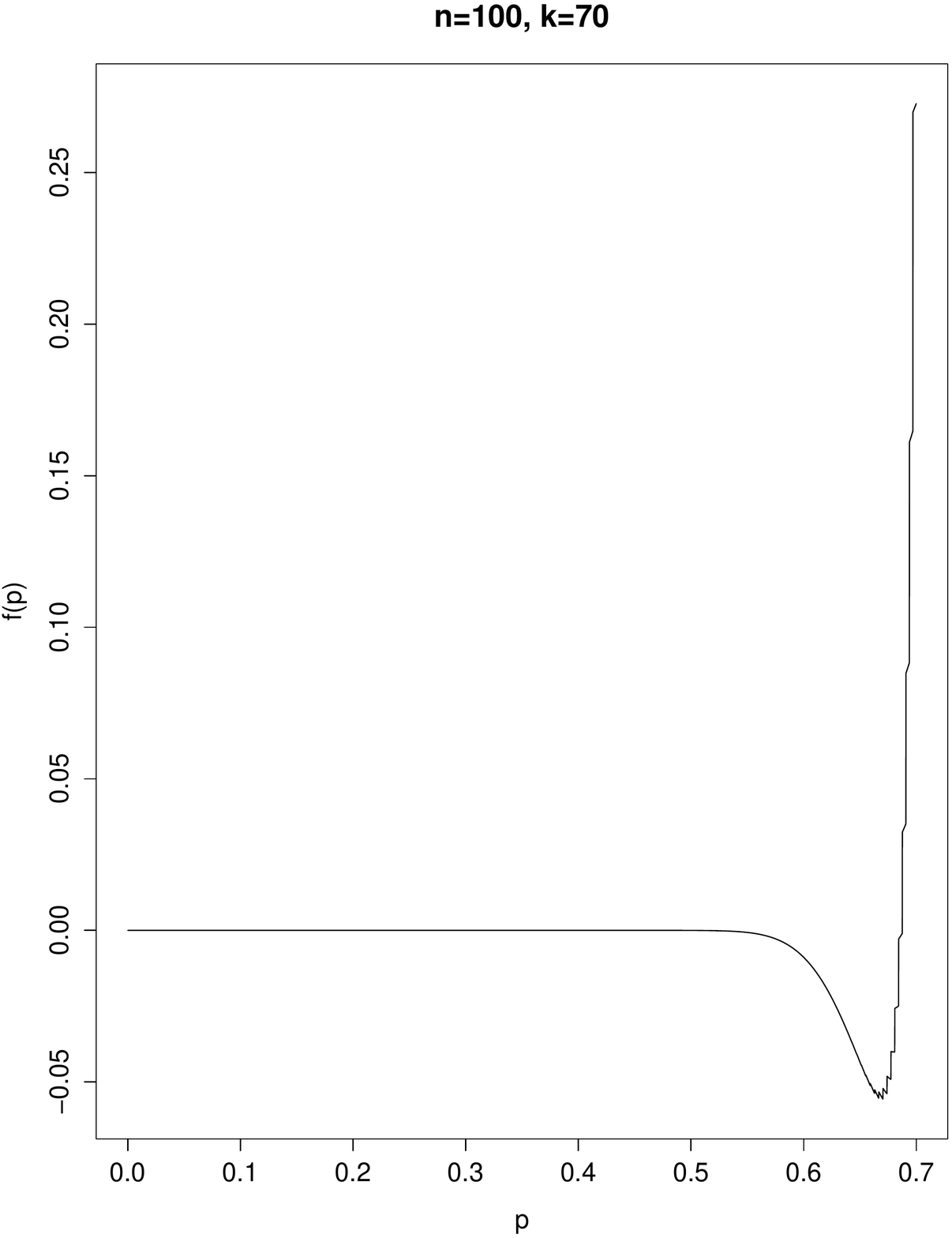}}
   \caption{Comparison between the bound of Theorem \ref{mainthm} and the bound given by 
   Eq.(\ref{Ashbound})}\label{fig:compare}
\end{figure}

\textbf{Acknowledgements}.
I am grateful to  Henk Don for fruitful discussions and valuable comments.

\end{document}